\documentclass[amstex,10pkjt,reqno]{amsart}
\usepackage{amsmath,amsfonts,amssymb,amsthm,enumerate,multicol}
\textwidth 16cm
\textheight 22cm
\topmargin -1.0cm
\oddsidemargin 1cm
\evensidemargin 1cm
\newtheorem{lemma}{Lemma}
\newtheorem{thm}{Theorem}

\thispagestyle{empty}
\numberwithin{equation}{section}
\begin{document}

\leftline{ \scriptsize \it  }
\title[]
{voronovskaja type asymptotic approximation by general Gamma type operators }
\maketitle

\begin{center}
{\bf Alok Kumar}
\vskip0.2in
Department of Computer Science\\
Dev Sanskriti Vishwavidyalaya Haridwar\\
Haridwar-249411, India\\
\vskip0.2in
 alokkpma@gmail.com\\
\vskip0.2in
 \end{center}

\begin{abstract}
In the present paper, we studied the voronovskaja type theorem for general Gamma type operators. Also, we obtain an error estimate for general Gamma type operators.\\
Keywords: General Gamma type operators, Voronovskaja type theorem, Approximation order, Modulus of continuity.\\
Mathematics Subject Classification(2010):  41A25, 26A15, 40A35.
\end{abstract}
\section{\textbf{Introduction}}
In \cite{LUP}, Lupas and M$\ddot{u}$ller defined and studied the Gamma operators $G_{n}(f;x)$ as
\begin{eqnarray*}
G_{n}(f;x) = \int_{0}^{\infty}g_{n}(x,u)f\bigg(\frac{n}{u}\bigg)du,
\end{eqnarray*}
where
\begin{eqnarray*}
\displaystyle g_{n}(x,u)=\frac{x^{n+1}}{n!}e^{-xu}u^{n}, ~~~~x>0.
\end{eqnarray*}

In \cite{SM}, Mazhar gives an important modifications of the Gamma operators using the same $ g_{n}(x,u)$
\begin{eqnarray*}
 F_{n}(f;x)  &=& \int_{0}^{\infty}\int_{0}^{\infty}g_{n}(x,u)g_{n-1}(u,t)f(t)dudt\\
 &=&\frac{(2n)! x^{n+1}}{n!(n-1)!}\int_{0}^{\infty}\frac{t^{n-1}}{(x+t)^{2n+1}}f(t)dt,\,\, n>1, \,\, x>0.
\end{eqnarray*}

Recently, Karsli \cite{HK} considered a modification and obtain the rate of convergence of these operators for functions with derivatives of bounded variation.
\begin{eqnarray*}
 L_{n}(f;x)  &=& \int_{0}^{\infty}\int_{0}^{\infty}g_{n+2}(x,u)g_{n}(u,t)f(t)dudt\\
 &=&\frac{(2n+3)!x^{n+3}}{n!(n+2)!}\int_{0}^{\infty}\frac{t^{n}}{(x+t)^{2n+4}}f(t)dt, \,\, x>0.
\end{eqnarray*}
Karsli and $\ddot{O}$zarslan obtained local and global approximation results for $L_{n}(f;x)$ in \cite{HMA}. Also, Voronovskaja type asymptotic formula for $L_{n}(f;x)$ were proved in \cite{AIZ} and \cite{GK}.\\

In the year 2007, Mao \cite{MO} define the following generalized Gamma type linear and positive operators
\begin{eqnarray*}
 M_{n,k}(f;x)  &=& \int_{0}^{\infty}\int_{0}^{\infty}g_{n}(x,u)g_{n-k}(u,t)f(t)dudt\\
 &=&\frac{(2n-k+1)!x^{n+1}}{n!(n-k)!}\int_{0}^{\infty}\frac{t^{n-k}}{(x+t)^{2n-k+2}}f(t)dt, \,\, x>0,
\end{eqnarray*}
which includes the operators $F_{n}(f;x)$ for $k=1$ and $ L_{n-2}(f;x)$ for $k=2$. Some approximation properties of $M_{n,k}$ were studied in \cite{HPM} and \cite{HKL}.\\
 We can rewrite the operators $M_{n,k}(f;x)$ as
\begin{eqnarray}\label{eq1}
 M_{n,k}(f;x) = \int_{0}^{\infty}K_{n,k}(x,t)f(t)dt,
\end{eqnarray}
where
\begin{eqnarray*}
K_{n,k}(x,t)=\frac{(2n-k+1)!x^{n+1}}{n!(n-k)!}\frac{t^{n-k}}{(x+t)^{2n-k+2}}, \,\, ~~x,t\in(0,\infty).
\end{eqnarray*}

The main goal of this paper is to obtain a Voronovskaja type asymptotic formula and an error estimates for the operators (\ref{eq1}).

\section{\textbf{Auxiliary Results}}
In this section, we give some lemmas which are necessary to prove our main results.
\begin{lemma}\label{l0}\cite{HKL}
For any $m\in N_{0}$(the set of non-negative integers), $m\leq n-k$
\begin{eqnarray}
 M_{n,k}(t^{m};x)=\frac{[n-k+m]_{m}}{[n]_{m}}x^{m}.
 \end{eqnarray}
where $n, k\in N$ and $[x]_{m}=x(x-1)...(x-m+1), [x]_{0}=1, x\in R.$ \\
In particular for m = 0, 1, 2... in (2.1)  we get
\begin{enumerate}[(i)]
   \item $ M_{n,k}(1;x)= 1;$
   \item $ M_{n,k}(t;x)=\displaystyle\frac{n-k+1}{n}x;$
   \item $ M_{n,k}(t^{2};x)=\displaystyle\frac{(n-k+2)(n-k+1)}{n(n-1)}x^{2}$.
\end{enumerate}
\end{lemma}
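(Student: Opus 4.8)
The plan is to evaluate $M_{n,k}(t^m;x)$ directly from the single--integral representation (\ref{eq1}). Putting $f(t)=t^m$ gives
\[
M_{n,k}(t^m;x)=\frac{(2n-k+1)!\,x^{n+1}}{n!\,(n-k)!}\int_{0}^{\infty}\frac{t^{\,n-k+m}}{(x+t)^{2n-k+2}}\,dt,
\]
so everything reduces to an integral of the form $\int_{0}^{\infty} t^{a-1}(x+t)^{-(a+b)}\,dt$ with $a=n-k+m+1$ and $b=n-m+1$. Since $k\in N$ and $m\le n-k$, we have $a\ge 1>0$ and $b\ge 2>0$, which is exactly what is needed for this integral to converge at both $0$ and $\infty$; this is the role of the hypothesis $m\le n-k$.

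Next I would compute that integral by the change of variable $t=x\,s$ (equivalently $w=t/(x+t)$), turning it into $x^{-b}\int_{0}^{\infty} s^{a-1}(1+s)^{-(a+b)}\,ds=x^{-b}B(a,b)$, the Euler Beta function. Using $B(a,b)=\Gamma(a)\Gamma(b)/\Gamma(a+b)=(a-1)!\,(b-1)!/(a+b-1)!$ with the above values of $a,b$, the integral equals $x^{-(n-m+1)}\dfrac{(n-k+m)!\,(n-m)!}{(2n-k+1)!}$. Substituting back, the factor $(2n-k+1)!$ cancels and the powers of $x$ combine as $x^{n+1}\cdot x^{-(n-m+1)}=x^{m}$, so
\[
M_{n,k}(t^m;x)=\frac{(n-k+m)!\,(n-m)!}{n!\,(n-k)!}\,x^{m}.
\]

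Finally I would rewrite this in falling--factorial notation: $(n-k+m)!/(n-k)!=(n-k+m)(n-k+m-1)\cdots(n-k+1)=[n-k+m]_m$ and $(n-m)!/n!=1/\bigl(n(n-1)\cdots(n-m+1)\bigr)=1/[n]_m$, which yields precisely (2.1); the cases $m=0,1,2$ in (i)--(iii) then follow by direct substitution. I do not expect a genuine obstacle here: the only points requiring care are the index bookkeeping with the factorials and noting that convergence of the Beta integral is guaranteed by $m\le n-k$. An equivalent alternative is to start from the iterated representation $M_{n,k}(f;x)=\int_{0}^{\infty}\!\int_{0}^{\infty} g_n(x,u)g_{n-k}(u,t)f(t)\,dt\,du$ and apply the elementary formula $\int_{0}^{\infty} e^{-au}u^{p}\,du=p!/a^{p+1}$ twice (first in $t$, then in $u$); this reaches the same closed form without invoking the Beta function.
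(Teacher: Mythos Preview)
Your computation is correct: the Beta integral evaluation with $a=n-k+m+1$, $b=n-m+1$ is exactly what is needed, the convergence check via $m\le n-k$ is appropriate, and the factorial rearrangement into $[n-k+m]_m/[n]_m$ is accurate. Note, however, that the paper does not supply its own proof of this lemma; it is quoted from \cite{HKL} (Karsli) and stated without argument, so there is no in-paper proof to compare against. Your direct Beta-function approach (and the alternative via two Gamma integrals from the iterated representation) is the standard way such moment formulas are established.
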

\begin{lemma}\label{l1}\cite{HKL}
Following equalities holds:
\begin{enumerate}[(i)]
   \item $M_{n,k}((t-x);x)=\displaystyle \frac{1-k}{n}x;$
   \item $M_{n,k}((t-x)^{2};x)=\displaystyle \frac{(k^2-5k+2n+4)}{n(n-1)}x^{2};$
   \item $M_{n,k}((t-x)^{m};x)=\displaystyle O\left(n^{-[(m+1)/2]}\right)$.
\end{enumerate}
\end{lemma}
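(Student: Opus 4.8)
The plan is to obtain all three identities directly from the moment formulas of Lemma~\ref{l0} together with the linearity of $M_{n,k}$. For (i) and (ii) this is a short computation. Writing $M_{n,k}((t-x);x)=M_{n,k}(t;x)-x\,M_{n,k}(1;x)$ and inserting parts (ii) and (i) of Lemma~\ref{l0} gives $\frac{n-k+1}{n}x-x=\frac{1-k}{n}x$. For (ii), expanding the square,
\begin{eqnarray*}
M_{n,k}((t-x)^2;x)=M_{n,k}(t^2;x)-2x\,M_{n,k}(t;x)+x^2M_{n,k}(1;x),
\end{eqnarray*}
substituting the three values from Lemma~\ref{l0}, and collecting everything over the common denominator $n(n-1)$ yields $\frac{k^2-5k+2n+4}{n(n-1)}x^2$ after simplification.

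For (iii) I would first combine the binomial theorem with Lemma~\ref{l0} to record the exact expression
\begin{eqnarray*}
M_{n,k}((t-x)^m;x)=x^m\sum_{j=0}^{m}\binom{m}{j}(-1)^{m-j}\,\frac{[n-k+j]_j}{[n]_j}.
\end{eqnarray*}
Then, using
\begin{eqnarray*}
\frac{[n-k+j]_j}{[n]_j}=\frac{\prod_{i=1}^{j}\left(1+(i-k)/n\right)}{\prod_{i=0}^{j-1}\left(1-i/n\right)}=\exp\!\left(\sum_{r\ge 1}\frac{a_r(j)}{n^r}\right),
\end{eqnarray*}
where, by the classical formulas for power sums, each $a_r(j)$ is a polynomial in $j$ of degree at most $r+1$, and expanding the exponential, for every fixed $L$ one obtains $\frac{[n-k+j]_j}{[n]_j}=\sum_{\ell=0}^{L}e_\ell(j)\,n^{-\ell}+O(n^{-L-1})$ with $e_0\equiv 1$; since a monomial coming from a partition $\ell=r_1+\cdots+r_s$ has degree $\sum_{t}(r_t+1)\le 2\ell$ in $j$, each coefficient $e_\ell(j)$ is a polynomial in $j$ of degree at most $2\ell$.

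The decisive step is then the finite-difference identity $\sum_{j=0}^{m}\binom{m}{j}(-1)^{m-j}P(j)=0$, valid for every polynomial $P$ with $\deg P<m$. Choosing $L=[(m+1)/2]$, every $\ell$ with $0\le\ell\le L-1$ satisfies $2\ell<m$, so $\sum_{j=0}^{m}\binom{m}{j}(-1)^{m-j}e_\ell(j)=0$; the term $\ell=L$ contributes $O(n^{-L})$ and the remainder $O(n^{-L-1})$. Hence $M_{n,k}((t-x)^m;x)=O\!\left(n^{-[(m+1)/2]}\right)$, in fact uniformly on compact subsets of $(0,\infty)$, the $x$-dependence being only the factor $x^m$.

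The one place that needs genuine care is the degree estimate $\deg_{j}e_\ell\le 2\ell$, because it is exactly this bound that forces the cancellation in the alternating sum; the remaining manipulations are bookkeeping. An alternative would be to derive a recurrence for the central moments and induct on $m$, but since Lemma~\ref{l0} already supplies closed forms for $M_{n,k}(t^j;x)$, the computation sketched above appears to be the most economical route.
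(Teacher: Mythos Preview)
Your argument is correct. Parts (i) and (ii) are the routine computations you indicate, and your treatment of (iii) via the asymptotic expansion of $[n-k+j]_j/[n]_j$ in powers of $1/n$, together with the vanishing of the $m$-th finite difference on polynomials of degree $<m$, is valid; the degree bound $\deg_j e_\ell\le 2\ell$ is the crucial point and your justification through partitions $\ell=r_1+\cdots+r_s$ with $\sum(r_t+1)=\ell+s\le 2\ell$ is sound.

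Note, however, that the present paper does not actually prove this lemma: it is quoted from \cite{HKL}, so there is no ``paper's own proof'' to compare against. The standard argument in that reference (and in most of the literature on such operators) proceeds differently from yours: one derives a recurrence of the form
\[
M_{n,k}(\varphi_{x,m+1};x)=\alpha_n(x)\,\frac{d}{dx}M_{n,k}(\varphi_{x,m};x)+\beta_{n,m}(x)\,M_{n,k}(\varphi_{x,m};x)+\gamma_{n,m}(x)\,M_{n,k}(\varphi_{x,m-1};x),
\]
with $\alpha_n,\gamma_{n,m}=O(n^{-1})$, by differentiating the kernel identity, and then obtains the order $O\bigl(n^{-[(m+1)/2]}\bigr)$ by a two-step induction on $m$. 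That route is lighter on analysis (no log/exp expansion, no Faulhaber-type power sums) and gives the polynomial-in-$x$ structure of the central moments for free, whereas your approach is more self-contained---it uses only Lemma~\ref{l0} and elementary finite-difference facts---and makes the cancellation mechanism completely explicit. Both lead to the same estimate; yours is the less conventional but perfectly legitimate path you already flagged in your closing remark.
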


For simplicity, put $\displaystyle \beta_{n}=\frac{(2n-k+1)!}{n!(n-k)!}$.

\begin{lemma}\label{l2}
If $r^{th}$ derivative $f^{(r)}(r=0,1,2...)$ exists continuously, then we get
\begin{eqnarray*}
M_{n,k}^{(r)}(f;x)=\beta_{n}x^{n+1-r}\int_{0}^{\infty}\frac{t^{n-k+r}}{(x+t)^{2n-k+2}}f^{(r)}(t)dt, \,\, x>0.
\end{eqnarray*}
\end{lemma}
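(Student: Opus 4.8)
The plan is to move all of the $x$-dependence of the kernel into the argument of $f$ by a scaling substitution, differentiate under the integral sign, and then undo the substitution.

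First I would substitute $t=xs$ (equivalently $u=t/x$) in the representation
\[
M_{n,k}(f;x)=\beta_{n}x^{n+1}\int_{0}^{\infty}\frac{t^{n-k}}{(x+t)^{2n-k+2}}f(t)\,dt .
\]
Since $dt=x\,ds$, $t^{n-k}=x^{n-k}s^{n-k}$ and $(x+t)^{2n-k+2}=x^{2n-k+2}(1+s)^{2n-k+2}$, the powers of $x$ combine to $n+1+(n-k)-(2n-k+2)+1=0$, so one gets the scale-free form
\[
M_{n,k}(f;x)=\beta_{n}\int_{0}^{\infty}\frac{s^{n-k}}{(1+s)^{2n-k+2}}\,f(xs)\,ds .
\]

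Next, because $\partial_{x}^{\,r}f(xs)=s^{r}f^{(r)}(xs)$ and $f^{(r)}$ is continuous, the Leibniz rule for differentiation under the integral sign applies on any compact subinterval $[a,b]\subset(0,\infty)$ (the integrand being dominated there by $s\mapsto s^{\,n-k+r}(1+s)^{-(2n-k+2)}\sup_{x\in[a,b]}|f^{(r)}(xs)|$, which is integrable under the standing admissibility hypothesis on $f$), and hence
\[
M_{n,k}^{(r)}(f;x)=\beta_{n}\int_{0}^{\infty}\frac{s^{n-k+r}}{(1+s)^{2n-k+2}}\,f^{(r)}(xs)\,ds .
\]
Finally, reversing the substitution with $s=t/x$, $ds=dt/x$, the powers of $x$ coming from $s^{n-k+r}=x^{-(n-k+r)}t^{n-k+r}$, from $(1+s)^{2n-k+2}=x^{-(2n-k+2)}(x+t)^{2n-k+2}$ and from the factor $1/x$ add up to $-(n-k+r)+(2n-k+2)-1=n+1-r$, which gives exactly the claimed identity.

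The only genuinely delicate step is the interchange of $\tfrac{d^{r}}{dx^{r}}$ with the improper integral over $(0,\infty)$; this is where one must use that $f$ is such that $M_{n,k}(f;x)$ and all the derivatives under consideration are finite, so that the domination above is valid. As an alternative to the scaling argument, the identity can be proved by induction on $r$: differentiating the stated formula for $M_{n,k}^{(r)}$ once in $x$ and integrating by parts once in $t$ — the boundary terms vanishing because $t^{\,n-k+r+1}(x+t)^{-(2n-k+2)}$ tends to $0$ as $t\to 0^{+}$ and as $t\to\infty$ — recovers the formula for $M_{n,k}^{(r+1)}$ after using the elementary identity $t^{\,n-k+r+1}=t^{\,n-k+r}(x+t)-x\,t^{\,n-k+r}$.
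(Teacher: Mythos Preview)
Your argument is correct and follows exactly the paper's route: substitute $t=xs$ to obtain the scale-free form, differentiate under the integral sign via Leibniz (picking up the factor $s^{r}$), and substitute back $s=t/x$. You add more care than the paper about justifying the interchange of $\tfrac{d^{r}}{dx^{r}}$ with the improper integral, and your alternative induction/integration-by-parts sketch is a nice bonus, but the core proof is the same.
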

\begin{proof}
 Using the substitution $t=vx$ in (\ref{eq1}), we obtain
\begin{eqnarray*}
M_{n,k}(f;x)=\beta_{n}\int_{0}^{\infty}\frac{v^{n-k}}{(1+v)^{2n-k+2}}f(vx)dv.
\end{eqnarray*}
Using Leibniz's rule $r (r = 0, 1, 2...)$ times, we obtain
\begin{eqnarray*}
M_{n,k}^{(r)}(f;x)&=& \beta_{n}\frac{d^{r}}{dx^{r}}\int_{0}^{\infty}\frac{v^{n-k}}{(1+v)^{2n-k+2}}f(vx)dv\\
&=& \beta_{n}\int_{0}^{\infty}\frac{v^{n-k}}{(1+v)^{2n-k+2}}\frac{\partial^{r}}{\partial x^{r}}f(vx)dv\\
&=& \beta_{n}\int_{0}^{\infty}\frac{v^{n-k+r}}{(1+v)^{2n-k+2}}f^{(r)}(vx)dv.
\end{eqnarray*}
Using $v=\frac{t}{x}$, we get
\begin{eqnarray*}
M_{n,k}^{(r)}(f;x)=\beta_{n}x^{n+1-r}\int_{0}^{\infty}\frac{t^{n-k+r}}{(x+t)^{2n-k+2}}f^{(r)}(t)dt.
\end{eqnarray*}
\end{proof}
Next, we define
\begin{eqnarray*}
M_{n,k,r}^{\ast}(g;x)=\frac{\beta_{n}x^{n+1-r}}{b(n,k,r)}\int_{0}^{\infty}\frac{t^{n-k+r}}{(x+t)^{2n-k+2}}g(t)dt,
\end{eqnarray*}
where
\begin{eqnarray*}
b(n,k,r)&=&\beta_{n}x^{n+1-r}\int_{0}^{\infty}\frac{t^{n-k+r}}{(x+t)^{2n-k+2}}dt=\frac{(n-r)!(n-k+r)!}{n!(n-k)!}.
\end{eqnarray*}
Let us define
\begin{eqnarray*}
e_{m}(t)=t^{m}, \,\,\, \varphi_{x,m}(t)=(t-x)^{m},  \,\,\, m\in N_{0},\,\,\, x, t\in(0,\infty).
\end{eqnarray*}

\begin{lemma}\label{l3}
For any $m\in N_{0}$, $m\leq n-r$ and $ r\leq n$
\begin{eqnarray}\label{eq3}
M_{n,k,r}^{\ast}(e_{m};x)=\frac{(n-r-m)!(n-k+r+m)!}{(n-r)!(n-k+r)!}x^{m};
\end{eqnarray}
and
\begin{eqnarray}\label{eq4}
M_{n,k,r}^{\ast}(\varphi_{x,m};x)=\left(\sum_{j=0}^{m}(-1)^{j}{m\choose j}\frac{(n-r-m+j)!(n-k+r+m-j)!}{(n-r)!(n-k+r)!}\right)x^{m},
\end{eqnarray}
for each $x\in(0,\infty).$
\end{lemma}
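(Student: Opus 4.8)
The plan is to establish both identities by direct computation, the only real ingredient being the classical Euler Beta integral
\[
\int_{0}^{\infty}\frac{t^{a-1}}{(x+t)^{a+b}}\,dt=x^{-b}\,\frac{\Gamma(a)\Gamma(b)}{\Gamma(a+b)},\qquad a,b>0,\ x>0.
\]
First I would handle the monomial case $g=e_{m}$. Inserting $g(t)=t^{m}$ into the definition of $M_{n,k,r}^{\ast}$ gives
\[
M_{n,k,r}^{\ast}(e_{m};x)=\frac{\beta_{n}x^{n+1-r}}{b(n,k,r)}\int_{0}^{\infty}\frac{t^{\,n-k+r+m}}{(x+t)^{2n-k+2}}\,dt,
\]
and the remaining integral is the Beta integral above with $a=n-k+r+m+1$ and $b=n-r-m+1$. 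Here the hypotheses $r\le n$ and $m\le n-r$ are precisely what force $b>0$ (and $a>0$), so the integral converges and all the factorials that appear are factorials of non-negative integers; its value is $x^{-(n-r-m+1)}\,\dfrac{(n-k+r+m)!\,(n-r-m)!}{(2n-k+1)!}$.

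Next I would collect constants and powers of $x$. The powers combine as $x^{n+1-r}\cdot x^{-(n-r-m+1)}=x^{m}$; the factor $\beta_{n}/(2n-k+1)!=1/\bigl(n!\,(n-k)!\bigr)$ cancels against the matching piece of $1/b(n,k,r)=n!\,(n-k)!/\bigl((n-r)!\,(n-k+r)!\bigr)$; and what survives is exactly $\dfrac{(n-r-m)!\,(n-k+r+m)!}{(n-r)!\,(n-k+r)!}\,x^{m}$, which is \eqref{eq3}. (The case $m=0$ is just the statement $M_{n,k,r}^{\ast}(1;x)=1$, confirming that $b(n,k,r)$ is the right normalization; note also that $b(n,k,r)$ is finite by the same convergence argument applied with $m=0$.)

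Finally, for the central-moment formula I would expand by the binomial theorem, $\varphi_{x,m}(t)=(t-x)^{m}=\sum_{j=0}^{m}(-1)^{j}\binom{m}{j}x^{j}t^{\,m-j}$, and use linearity of $M_{n,k,r}^{\ast}$ together with \eqref{eq3} applied to $e_{m-j}$ — legitimate since each exponent satisfies $m-j\le m\le n-r$. The $j$-th term contributes $(-1)^{j}\binom{m}{j}x^{j}\cdot\dfrac{(n-r-m+j)!\,(n-k+r+m-j)!}{(n-r)!\,(n-k+r)!}\,x^{m-j}$, and since $x^{j}\cdot x^{m-j}=x^{m}$ pulls out of the sum, this is precisely \eqref{eq4}. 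I do not anticipate a genuine obstacle: the argument is essentially bookkeeping, and the only point worth stating carefully is the role of the range restrictions $r\le n$, $m\le n-r$ in guaranteeing convergence of the defining integral and the integrality of every factorial involved.
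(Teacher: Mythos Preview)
Your proof is correct and matches the paper's approach: for \eqref{eq4} you use exactly the same binomial expansion and linearity argument the paper gives, and for \eqref{eq3} the paper simply cites a reference, whereas you supply the explicit Beta-integral computation that reference would contain. So the only difference is that your argument is self-contained while the paper's is not.
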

\begin{proof}
The proof of (\ref{eq3}) is follows from \cite{AII}. On the other hand, we have the following identity,
\begin{eqnarray*}
(t-x)^{m}=\sum_{j=0}^{m}(-1)^{j}{m\choose j}x^{j}t^{m-j}.
\end{eqnarray*}
Then, we have
\begin{eqnarray*}
M_{n,k,r}^{\ast}((t-x)^{m};x)&=&\int_{0}^{\infty}K_{n,k}(x,t)(t-x)^{m}dt\\
&=&\int_{0}^{\infty}K_{n,k}(x,t)\sum_{j=0}^{m}(-1)^{j}{m\choose j}x^{j}t^{m-j}dt\\
&=&\sum_{j=0}^{m}(-1)^{j}{m\choose j}x^{j}M_{n,k,r}^{\ast}(t^{m-j};x).
\end{eqnarray*}
Using (\ref{eq3}), we get (\ref{eq4}).
\end{proof}

\begin{lemma}\label{l5} For $m=0,1,2,3,4$, one has
\begin{enumerate}[(i)]
   \item $ M_{n,k,r}^{\ast}(\varphi_{x,0};x)=\displaystyle1,$
   \item $ M_{n,k,r}^{\ast}(\varphi_{x,1};x)=\displaystyle\frac{2r-k+1}{n-r}x,$
   \item $ M_{n,k,r}^{\ast}(\varphi_{x,2};x)=\displaystyle\frac{4r^{2}+4r(2-k)+2n+k^{2}-5k+4}{(n-r)(n-r-1)}x^{2},$
   \item $ M_{n,k,r}^{\ast}(\varphi_{x,3};x)=\displaystyle\frac{c_{n,k,r}}{(n-r)(n-r-1)(n-r-2)}x^{3},$
   \item $ M_{n,k,r}^{\ast}(\varphi_{x,4};x)=\displaystyle\frac{d_{n,k,r}}{(n-r)(n-r-1)(n-r-2)(n-r-3)}x^{4},$
\end{enumerate}
where $c_{n,k,r}=8r^{3}+r^{2}(36-2k)+r(51+14n-42k+6k^{2})-k^{3}+12k^{2}-34k-n^{2}+n(17-6k-6k^{2}+2kr)+21$ and\\ $d_{n,k,r}=16r^{4}+r^{3}(128-32k)+r^{2}(348+48n-216k+24k^{2})+r(366+177n+k(6n^{2}-54n-440)+120k^{2}-8k^{3})+k^{4}+k^{3}(4n-22)+139k^{2}-k(245+116n)+24n^{2}+131n+100$.
\end{lemma}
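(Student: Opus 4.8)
The plan is to obtain all five identities by specializing formula (\ref{eq4}) of Lemma \ref{l3} to $m=0,1,2,3,4$ and then simplifying the resulting alternating sum of factorial ratios into the stated rational functions. Item (i) needs no computation: $\varphi_{x,0}\equiv 1$ and, by the very definition of $b(n,k,r)$, the kernel of $M_{n,k,r}^{\ast}$ integrates to $1$, so $M_{n,k,r}^{\ast}(\varphi_{x,0};x)=1$. For $m\geq 1$ I would start from
\begin{eqnarray*}
M_{n,k,r}^{\ast}(\varphi_{x,m};x)=\left(\sum_{j=0}^{m}(-1)^{j}{m\choose j}\frac{(n-r-m+j)!\,(n-k+r+m-j)!}{(n-r)!\,(n-k+r)!}\right)x^{m},
\end{eqnarray*}
and clear denominators. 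Writing $A=n-r$ and $B=n-k+r$, the $j$-th factorial ratio equals
\begin{eqnarray*}
\frac{(A-m+1)(A-m+2)\cdots(A-m+j)\cdot(B+1)(B+2)\cdots(B+m-j)}{A(A-1)\cdots(A-m+1)},
\end{eqnarray*}
so every term carries the common denominator $(n-r)(n-r-1)\cdots(n-r-m+1)$ --- precisely the denominator appearing in (ii)--(v).

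Pulling out this common factor, the bracket becomes $P_m(A,B)/\big[(n-r)(n-r-1)\cdots(n-r-m+1)\big]$ with
\begin{eqnarray*}
P_m(A,B)=\sum_{j=0}^{m}(-1)^{j}{m\choose j}\Big[\prod_{i=1}^{j}(A-m+i)\Big]\Big[\prod_{i=1}^{m-j}(B+i)\Big],
\end{eqnarray*}
a polynomial of degree $m$. I would expand $P_1,P_2,P_3,P_4$ by multiplying out the linear factors and collecting powers of $A$ and $B$, then substitute $A=n-r$, $B=n-k+r$ (it is handy that $A-B=k-2r$ and $A+B=2n-k$) and re-expand in $n,k,r$. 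For $m=1$ this gives $P_1=(B+1)-A=2r-k+1$; for $m=2$ one finds $P_2=(A-B)^2-3A+5B+4$, which becomes $4r^2+4r(2-k)+2n+k^2-5k+4$; the identical mechanical expansion for $m=3,4$ yields $c_{n,k,r}$ and $d_{n,k,r}$.

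The only genuine difficulty is the length of the $m=3$ and $m=4$ expansions: for $m=4$ the numerator is a sum of five products of up to four linear factors, and matching it term-by-term against the stated $d_{n,k,r}$ demands careful bookkeeping of every monomial in $n,k,r$ of total degree up to four. A useful running check is Lemma \ref{l1}(iii): dividing $P_m$ by the falling factorial and letting $n\to\infty$ must reproduce the orders $O(n^{-\lceil(m+1)/2\rceil})$, so the top-degree $n$-contributions in $P_3$ and $P_4$ have to cancel against the denominator in the predicted way, and any mismatch flags an arithmetic slip. One may alternatively observe that the kernel of $M_{n,k,r}^{\ast}$ is literally $K_{n-r,\,k-2r}$, i.e. $M_{n,k,r}^{\ast}(g;x)=M_{n-r,\,k-2r}(g;x)$, which would reduce the lemma to the central-moment formulas for $M_{N,K}$ with $N=n-r$, $K=k-2r$; but since moments of $M_{N,K}$ of orders $3$ and $4$ are not recorded in the excerpt, this reformulation merely relocates the same computation.
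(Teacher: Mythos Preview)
Your proposal is correct and is precisely the computation the paper intends: Lemma~\ref{l5} is stated without proof, as a direct specialization of formula~(\ref{eq4}) in Lemma~\ref{l3} to $m=0,1,2,3,4$, and your substitution $A=n-r$, $B=n-k+r$ together with the common falling-factorial denominator is the natural way to carry it out. Your side observation that $M_{n,k,r}^{\ast}=M_{n-r,\,k-2r}$ is also correct and gives a tidy consistency check against Lemma~\ref{l1} for $m\le 2$.
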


\section{\textbf{Voronovskaja type theorem}}
In this section we obtain the Voronovskaja type theorem for the operators $M_{n,k}^{(r)}$.\\
Let $C_{B}[0,\infty)$ be the space of all real valued continuous and bounded functions on $[0,\infty)$ endowed with the usual supremum norm.
By $C_{B}^{(r+2)}[0,\infty)$($r\in N_{0}$), we denote the space of all functions $f\in C_{B}[0,\infty)$ such that $f', f'',...f^{(r+2)}\in C_{B}[0,\infty)$.
\begin{thm}{\label{t1}} Let $f$ be integrable in $(0,\infty)$ and admits its $(r+1)^{th}$ and $(r+2)^{th}$ derivatives, which are bounded at a fixed point $x\in(0,\infty)$ and $f^{(r)}(t)=O(t^{\alpha})$, as $t\rightarrow\infty$ for some $\alpha>0$, then
\begin{eqnarray*}
\lim_{n\rightarrow\infty}n\left(\frac{1}{b(n,k,r)}M_{n,k}^{(r)}(f;x)-f^{(r)}(x)\right)= (2r-k+1)x f^{(r+1)}(x)+x^{2}f^{(r+2)}(x)
\end{eqnarray*}
holds.
\end{thm}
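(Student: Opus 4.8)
The plan is to reduce everything to a Voronovskaja-type statement for the auxiliary operator $M_{n,k,r}^{\ast}$. Comparing Lemma \ref{l2} with the definition of $M_{n,k,r}^{\ast}$ one reads off, for every $n$ large enough that the integral converges (which it does once $n-r+1>\alpha$, by the growth hypothesis $f^{(r)}(t)=O(t^{\alpha})$), the identity
\[
\frac{1}{b(n,k,r)}M_{n,k}^{(r)}(f;x)=M_{n,k,r}^{\ast}\big(f^{(r)};x\big).
\]
Writing $g:=f^{(r)}$, and noting that $g'(x)=f^{(r+1)}(x)$, $g''(x)=f^{(r+2)}(x)$ are the finite values whose existence is assumed at $x$, the theorem becomes
\[
\lim_{n\to\infty} n\big(M_{n,k,r}^{\ast}(g;x)-g(x)\big)=(2r-k+1)x\,g'(x)+x^{2}g''(x).
\]

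Next I would expand $g$ by Taylor's formula at $x$: $g(t)=g(x)+g'(x)(t-x)+\tfrac12 g''(x)(t-x)^{2}+h(t)(t-x)^{2}$, where $h(t)\to0$ as $t\to x$, $h$ is bounded in a neighbourhood of $x$, and, using $g(t)=O(t^{\alpha})$ together with the constancy of $g(x),g'(x),g''(x)$, one has $|h(t)(t-x)^{2}|\le C(1+t^{\gamma})$ for all $t>0$ with $\gamma:=\max(\alpha,2)$. Applying the positive linear operator $M_{n,k,r}^{\ast}$, using $M_{n,k,r}^{\ast}(\varphi_{x,0};x)=1$ (Lemma \ref{l5}(i)), and multiplying by $n$,
\[
n\big(M_{n,k,r}^{\ast}(g;x)-g(x)\big)=g'(x)\,n\,M_{n,k,r}^{\ast}(\varphi_{x,1};x)+\tfrac12 g''(x)\,n\,M_{n,k,r}^{\ast}(\varphi_{x,2};x)+n\,M_{n,k,r}^{\ast}\big(h\,\varphi_{x,2};x\big).
\]
By Lemma \ref{l5}(ii) the first term is $\frac{n}{n-r}(2r-k+1)x\,g'(x)\to(2r-k+1)x\,g'(x)$; by Lemma \ref{l5}(iii) the second is $\frac{n(4r^{2}+4r(2-k)+2n+k^{2}-5k+4)}{2(n-r)(n-r-1)}x^{2}g''(x)$, whose limit is $x^{2}g''(x)$, since the quotient of the leading terms is $2n^{2}/(2n^{2})=1$. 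These two contributions are exactly the asserted right-hand side, so it remains to show the third term vanishes.

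The hard part is precisely this remainder $n\,M_{n,k,r}^{\ast}(h\,\varphi_{x,2};x)\to0$. Fix $\varepsilon>0$ and pick $\delta>0$ with $|h(t)|<\varepsilon$ for $|t-x|<\delta$, then split the (positive) operator over $\{|t-x|<\delta\}$ and $\{|t-x|\ge\delta\}$. On the first set the contribution is at most $\varepsilon\,n\,M_{n,k,r}^{\ast}(\varphi_{x,2};x)$, which is bounded (it converges to $2x^{2}$ by Lemma \ref{l5}(iii)), hence $\le C\varepsilon$. On the second set, choose an integer $s\ge2$ with $2s>\alpha$; since $|h(t)\varphi_{x,2}(t)|\le C(1+t^{\gamma})\le C_{\delta}(t-x)^{2s}$ whenever $|t-x|\ge\delta$, the contribution is bounded by $C_{\delta}\,n\,M_{n,k,r}^{\ast}(\varphi_{x,2s};x)$. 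Expanding the finite-difference formula (\ref{eq4}) of Lemma \ref{l3} exactly as in the derivation of Lemma \ref{l5}(v) yields $M_{n,k,r}^{\ast}(\varphi_{x,2s};x)=O(n^{-s})$, so this term is $O(n^{1-s})\to0$. Letting $n\to\infty$ and then $\varepsilon\to0$ shows the remainder tends to $0$, and combining the three limits completes the proof. The only genuine subtlety throughout is that $f^{(r)}$ is merely of polynomial growth rather than bounded, which is why the tail must be dominated by a sufficiently high even central moment and its $O(n^{-s})$ decay exploited.
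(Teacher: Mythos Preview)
Your argument is correct and follows the same overall outline as the paper: identify $\frac{1}{b(n,k,r)}M_{n,k}^{(r)}(f;x)$ with $M_{n,k,r}^{\ast}(f^{(r)};x)$, apply Taylor's formula to $f^{(r)}$ at $x$, read off the main contributions from the first and second central moments of $M_{n,k,r}^{\ast}$ (Lemma~\ref{l5}(ii),(iii)), and show the remainder $n\,M_{n,k,r}^{\ast}(h\,\varphi_{x,2};x)$ tends to~$0$.

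The genuine difference lies in how the remainder is killed. The paper applies Cauchy--Schwarz,
\[
n\,M_{n,k,r}^{\ast}(\xi\,\varphi_{x,2};x)\le \Big(n^{2}M_{n,k,r}^{\ast}(\varphi_{x,4};x)\Big)^{1/2}\Big(M_{n,k,r}^{\ast}(\xi^{2};x)\Big)^{1/2},
\]
and then invokes a Korovkin-type argument for $M_{n,k,r}^{\ast}(\xi^{2};x)\to0$, asserting that $\xi^{2}(\cdot,x)$ is bounded as $t\to\infty$. You instead perform a near/far splitting in $t$ and dominate the tail by a sufficiently high even central moment $M_{n,k,r}^{\ast}(\varphi_{x,2s};x)=O(n^{-s})$. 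Your route is actually more honest under the stated hypothesis $f^{(r)}(t)=O(t^{\alpha})$: if $\alpha>2$ then $\xi(t,x)\sim t^{\alpha-2}$ at infinity and $\xi^{2}$ is \emph{not} bounded, so the paper's Korovkin step is not literally justified without a further argument of exactly the kind you supply. Conversely, the paper's Cauchy--Schwarz method is slicker when $\xi$ is genuinely bounded (e.g.\ for $f^{(r)}\in C_{B}$), avoiding the choice of $s$ and the higher-moment estimate. One small point to tighten in your write-up: the decay $M_{n,k,r}^{\ast}(\varphi_{x,2s};x)=O(n^{-s})$ is asserted by analogy with Lemma~\ref{l5}(v); it does follow from the explicit formula~(\ref{eq4}) (and is the $M_{n,k,r}^{\ast}$-analogue of Lemma~\ref{l1}(iii)), but a one-line justification of the cancellation in the alternating sum, or a reference to that lemma, would make the argument self-contained.
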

\begin{proof}
Using Taylor's theorem, we get
\begin{eqnarray*}
f^{(r)}(t)-f^{(r)}(x)=(t-x)f^{(r+1)}(x)+\frac{1}{2}(t-x)^{2}f^{(r+2)}(x)+(t-x)^{2}\xi(t,x),
\end{eqnarray*}
where $\xi(t,x)$ is the peano form of the remainder and $\displaystyle\lim_{t\rightarrow x}\xi(t,x)=0$.\\
Then, we have\\
\noindent
$\frac{1}{b(n,k,r)}M_{n,k}^{(r)}(f;x)-f^{(r)}(x)$
\begin{eqnarray*}
&=&\frac{\beta_{n}}{b(n,k,r)}x^{n+1-r}\int_{0}^{\infty}\frac{t^{n-k+r}}{(x+t)^{2n-k+2}}\bigg(f^{(r)}(t)-f^{(r)}(x)\bigg)dt\\
&=&\frac{\beta_{n}}{b(n,k,r)}x^{n+1-r}\int_{0}^{\infty}\frac{t^{n-k+r}}{(x+t)^{2n-k+2}}\left((t-x)f^{(r+1)}(x)+\frac{1}{2}(t-x)^{2}f^{(r+2)}(x)+(t-x)^{2}\xi(t,x)\right)dt\\
&=& f^{(r+1)}(x)M_{n,k,r}^{\ast}(t-x,x)+\frac{1}{2}f^{(r+2)}(x)M_{n,k,r}^{\ast}((t-x)^{2},x)+M_{n,k,r}^{\ast}((t-x)^{2}\xi(t,x);x).
\end{eqnarray*}
Using Lemma \ref{l5}, we get
\begin{eqnarray*}
n\bigg(\frac{1}{b(n,k,r)}M_{n,k}^{(r)}(f;x)-f^{(r)}(x)\bigg)=\frac{n(2r-k+1)}{n-r}xf^{(r+1)}(x)+\frac{n(4r^{2}+4r(2-k)+2n+k^{2}-5k+4)}{2(n-r)(n-r-1)}\\
x^{2}f^{(r+2)}(x)+
n M_{n,k,r}^{\ast}((t-x)^{2}\xi(t,x);x).
\end{eqnarray*}
By using Cauchy-Schwarz inequality, we have
\begin{eqnarray}\label{eq6}
n\left(M_{n,k,r}^{\ast}((t-x)^{2}\xi(t,x);x)\right)\leq\sqrt{n^{2}M_{n,k,r}^{\ast}(\varphi_{x,4};x)}\sqrt{M_{n,k,r}^{\ast}(\xi^{2}(t,x);x)}.
\end{eqnarray}
We observe that $\xi^{2}(x,x)=0$ and $\xi^{2}(.,x)$ is continuous at $t\in(0,\infty)$ and bounded as $t\rightarrow\infty$. Then from Korovkin theorem that
\begin{eqnarray}\label{eq7}
\lim_{n\rightarrow\infty}M_{n,k,r}^{\ast}(\xi^{2}(t,x);x)=\xi^{2}(x,x)=0,
\end{eqnarray}
in view of fact that $M_{n,k,r}^{\ast}(\varphi_{x,4};x)=O\bigg(\displaystyle\frac{1}{n^{2}}\bigg).$ Now, from (\ref{eq6}) and (\ref{eq7}) we obtain
\begin{eqnarray}\label{eq8}
\lim_{n\rightarrow\infty}n M_{n,k,r}^{\ast}\left((t-x)^{2}\xi(t,x);x\right)=0.
\end{eqnarray}
Using (\ref{eq8}), we have
\begin{eqnarray*}
\lim_{n\rightarrow\infty}n\left(\frac{1}{b(n,k,r)}M_{n,k}^{(r)}(f;x)-f^{(r)}(x)\right)= (2r-k+1)x f^{(r+1)}(x)+x^{2}f^{(r+2)}(x).
\end{eqnarray*}
This completes the proof.
\end{proof}

\section{\textbf{Direct results}}
In this section we obtain the rate of convergence of the operators $M_{n,k}^{(r)}$.\\
Let us consider the following K-functional:
\begin{eqnarray}\label{e11}
  K(f,\delta) &=& \inf_{g\in C_{B}^{2}[0,\infty)}\{\parallel f-g \parallel+\delta \parallel g''\parallel\},
\end{eqnarray}
where $\delta>0$. By, p. 177, Theorem 2.4 in \cite{RAD}, there exists an absolute constant $C>0$ such that
\begin{eqnarray}\label{e12}
K(f,\delta) \leq C\omega_{2}(f,\sqrt\delta),
\end{eqnarray}
where
\begin{eqnarray}\label{e13}
 \omega_{2}(f,\sqrt\delta)  &=& \sup_{0<h\leq\sqrt\delta} \sup_{x \in [0,\infty)}\mid f(x+2h)-2f(x+h)+f(x) \mid
\end{eqnarray}
is the second order modulus of smoothness of $f$. By
\begin{eqnarray*}
  \omega(f,\delta) &=&  \sup_{0<h\leq\delta} \sup_{x \in [0,\infty)}\mid f(x+h)-f(x) \mid ,
\end{eqnarray*}
we denote the first order modulus of continuity of $f$ and satisfies the following property:
\begin{eqnarray}\label{e14}
|f(t)-f(x)|\leq\bigg(1+\frac{|t-x|}{\delta}\bigg)\omega(f,\delta),
\end{eqnarray}
where $\delta>0$.\\

\begin{thm}{\label{t2}}
Let $f\in C_{B}^{r}[0,\infty)$ and $r\in N_{0}$. Then for $n>r$, we have
\begin{eqnarray*}
\bigg|\frac{1}{b(n,k,r)}M_{n,k}^{(r)}(f;x)-f^{(r)}(x)\bigg|\leq 2\omega(f^{(r)},\sqrt{\delta_{n}}),
\end{eqnarray*}
where
\begin{eqnarray*}
\delta_{n}=\left(\frac{4r^{2}+4r(2-k)+2n+k^{2}-5k+4}{(n-r)(n-r-1)}\right)x^{2}.
\end{eqnarray*}
\end{thm}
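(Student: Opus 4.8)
The plan is to reduce everything to the auxiliary operator $M_{n,k,r}^{\ast}$ and then apply the standard modulus-of-continuity estimate (\ref{e14}) together with the Cauchy--Schwarz inequality. First I would observe that, comparing Lemma \ref{l2} with the definition of $M_{n,k,r}^{\ast}$, one has the exact identity
\begin{eqnarray*}
\frac{1}{b(n,k,r)}M_{n,k}^{(r)}(f;x)=M_{n,k,r}^{\ast}(f^{(r)};x),
\end{eqnarray*}
so the quantity to be estimated is $M_{n,k,r}^{\ast}(f^{(r)};x)-f^{(r)}(x)$. Since $M_{n,k,r}^{\ast}$ is a positive linear operator with $M_{n,k,r}^{\ast}(e_{0};x)=1$ by Lemma \ref{l5}(i), I can absorb $f^{(r)}(x)$ into the integral and write this difference as $M_{n,k,r}^{\ast}\big(f^{(r)}(t)-f^{(r)}(x);x\big)$, which is therefore bounded in absolute value by $M_{n,k,r}^{\ast}\big(|f^{(r)}(t)-f^{(r)}(x)|;x\big)$.

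Next I would invoke property (\ref{e14}) applied to $f^{(r)}\in C_{B}[0,\infty)$: for any $\delta>0$,
\begin{eqnarray*}
|f^{(r)}(t)-f^{(r)}(x)|\leq\Big(1+\frac{|t-x|}{\delta}\Big)\omega(f^{(r)},\delta).
\end{eqnarray*}
Applying the positive linear operator $M_{n,k,r}^{\ast}$ and using again $M_{n,k,r}^{\ast}(e_{0};x)=1$ gives
\begin{eqnarray*}
\Big|\frac{1}{b(n,k,r)}M_{n,k}^{(r)}(f;x)-f^{(r)}(x)\Big|\leq\Big(1+\frac{1}{\delta}M_{n,k,r}^{\ast}(|t-x|;x)\Big)\omega(f^{(r)},\delta).
\end{eqnarray*}
The first absolute moment is controlled by Cauchy--Schwarz together with Lemma \ref{l5}(iii):
\begin{eqnarray*}
M_{n,k,r}^{\ast}(|t-x|;x)\leq\sqrt{M_{n,k,r}^{\ast}(\varphi_{x,2};x)}=\sqrt{\delta_{n}},
\end{eqnarray*}
where $\delta_{n}$ is precisely the expression appearing in the statement.

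Finally, choosing $\delta=\sqrt{\delta_{n}}$ makes the bracket equal to $2$, which yields the asserted inequality (the hypothesis $n>r$, in fact $n-r\geq 2$, is what guarantees $b(n,k,r)$ and $\delta_{n}$ are well defined and positive). The argument is short and essentially mechanical; the only points needing a little care are the identification $\frac{1}{b(n,k,r)}M_{n,k}^{(r)}(f;x)=M_{n,k,r}^{\ast}(f^{(r)};x)$ and the remark that $f\in C_{B}^{r}[0,\infty)$ ensures $f^{(r)}$ is bounded and uniformly continuous, so that $\omega(f^{(r)},\cdot)$ is finite. Since all the substantive content has already been packaged into Lemma \ref{l5}, I do not expect any genuine obstacle.
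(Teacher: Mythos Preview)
Your proposal is correct and follows essentially the same route as the paper: identify $\frac{1}{b(n,k,r)}M_{n,k}^{(r)}(f;x)=M_{n,k,r}^{\ast}(f^{(r)};x)$, use positivity together with the modulus inequality (\ref{e14}), apply Cauchy--Schwarz to pass from the first absolute moment to the second central moment from Lemma \ref{l5}(iii), and then set $\delta=\sqrt{\delta_{n}}$. Your added remarks on why $n-r\geq 2$ is implicitly needed and why $\omega(f^{(r)},\cdot)$ is finite are valid refinements that the paper leaves implicit.
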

\begin{proof}
By using monotonicity of $ M_{n,k,r}^{\ast}$, we get\\
\noindent
$\left|\frac{1}{b(n,k,r)}M_{n,k}^{(r)}(f;x)-f^{(r)}(x)\right|$
\begin{eqnarray*}
&=&\left|\frac{\beta_{n}}{b(n,k,r)}x^{n+1-r}\int_{0}^{\infty}\frac{t^{n-k+r}}{(x+t)^{2n-k+2}}(f^{(r)}(t)-f^{(r)}(x))dt\right|\\
&=&|M_{n,k,r}^{\ast}((f^{(r)}(t)-f^{(r)}(x));x)|\\
&\leq& M_{n,k,r}^{\ast}(|f^{(r)}(t)-f^{(r)}(x)|;x)\\
&\leq&\omega\left(f^{(r)},\delta\right)\frac{\beta_{n}}{b(n,k,r)}x^{n+1-r}\int_{0}^{\infty}\frac{t^{n-k+r}}{(x+t)^{2n-k+2}}\bigg(1+\frac{|t-x|}{\delta}\bigg)dt\\
&\leq&\omega\left(f^{(r)},\delta\right)\bigg(1+\frac{1}{\delta}\frac{\beta_{n}}{b(n,k,r)}x^{n+1-r}\int_{0}^{\infty}\frac{t^{n-k+r}}{(x+t)^{2n-k+2}}|t-x|dt\bigg).
\end{eqnarray*}
Thus, by applying the Cauchy-Schwarz inequality, we have
\begin{eqnarray*}
\left|\frac{1}{b(n,k,r)}M_{n,k}^{(r)}(f;x)-f^{(r)}(x)\right|\leq\omega\left(f^{(r)},\delta\right)\bigg(1+\frac{1}{\delta}\left(M_{n,k,r}^{\ast}((t-x)^{2};x)\right)^{1/2}\bigg).
\end{eqnarray*}
Choosing $ \delta=\sqrt{\delta_{n}}$, we have
\begin{eqnarray*}
\bigg|\frac{1}{b(n,k,r)}M_{n,k}^{(r)}(f;x)-f^{(r)}(x)\bigg|\leq 2\omega\left(f^{(r)},\sqrt{\delta_{n}}\right).
\end{eqnarray*}
Hence, the proof is completed.
\end{proof}

\begin{thm}{\label{t3}}
Let $f\in C_{B}^{r}[0,\infty)$ and $r\in N_{0}$. Then for $n>r$, we have
\begin{eqnarray*}
\bigg|\frac{1}{b(n,k,r)}M_{n,k}^{(r)}(f;x)-f^{(r)}(x)\bigg|\leq C\omega_{2}\left(f^{(r)},\gamma_{n}\right)+\omega\left(f^{(r)},\frac{2r-k+1}{n-r}x\right),
\end{eqnarray*}
where $C$ is an absolute constant and
\begin{eqnarray*}
\gamma_{n}=\left(\frac{4r^{2}+4r(2-k)+2n+k^{2}-5k+4}{(n-r)(n-r-1)}x^{2}+\bigg(\frac{2r-k+1}{n-r}x\bigg)^{2}\right)^{1/2}.
\end{eqnarray*}
\end{thm}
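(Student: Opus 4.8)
The plan is to run the classical $K$-functional argument, reducing everything to the auxiliary operator $M_{n,k,r}^{\ast}$ and to a linear-reproducing modification of it. First I would note, exactly as in the proofs of Theorems~\ref{t1} and \ref{t2}, that Lemma~\ref{l2} together with the definition of $b(n,k,r)$ yields the identity
\[
\frac{1}{b(n,k,r)}M_{n,k}^{(r)}(f;x)-f^{(r)}(x)=M_{n,k,r}^{\ast}(f^{(r)};x)-f^{(r)}(x),
\]
so, writing $h:=f^{(r)}\in C_{B}[0,\infty)$, the task becomes to estimate $|M_{n,k,r}^{\ast}(h;x)-h(x)|$ in terms of $\omega_{2}(h,\cdot)$ and $\omega(h,\cdot)$.

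Since $M_{n,k,r}^{\ast}(\varphi_{x,1};x)\neq0$ by Lemma~\ref{l5}(ii), the operator $M_{n,k,r}^{\ast}$ does not fix linear functions, so I would introduce the modified operator
\[
\widehat{M}_{n,k,r}(h;x):=M_{n,k,r}^{\ast}(h;x)+h(x)-h(y_{n}),\qquad y_{n}:=M_{n,k,r}^{\ast}(e_{1};x)=x+\frac{2r-k+1}{n-r}x.
\]
Then $\widehat{M}_{n,k,r}(e_{0};x)=1$ and $\widehat{M}_{n,k,r}(e_{1};x)=x$, hence $\widehat{M}_{n,k,r}(\varphi_{x,1};x)=0$, while $|\widehat{M}_{n,k,r}(h;x)|\le3\|h\|$ for all $h\in C_{B}[0,\infty)$ because $M_{n,k,r}^{\ast}$ is positive and reproduces constants. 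For $g\in C_{B}^{2}[0,\infty)$ I would apply Taylor's formula $g(t)=g(x)+(t-x)g'(x)+\int_{x}^{t}(t-u)g''(u)\,du$; applying $\widehat{M}_{n,k,r}$ annihilates the linear term, and using $\big|\int_{x}^{t}(t-u)g''(u)\,du\big|\le\frac12(t-x)^{2}\|g''\|$ together with the analogous bound at $t=y_{n}$ gives
\[
|\widehat{M}_{n,k,r}(g;x)-g(x)|\le\frac12\|g''\|\Big(M_{n,k,r}^{\ast}(\varphi_{x,2};x)+(y_{n}-x)^{2}\Big)=\frac12\|g''\|\,\gamma_{n}^{2},
\]
the last equality being precisely the definition of $\gamma_{n}$ in view of Lemma~\ref{l5}(ii),(iii).

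For an arbitrary $g\in C_{B}^{2}[0,\infty)$ I would then decompose
\[
M_{n,k,r}^{\ast}(h;x)-h(x)=\widehat{M}_{n,k,r}(h-g;x)-(h-g)(x)+\widehat{M}_{n,k,r}(g;x)-g(x)+\big(h(y_{n})-h(x)\big),
\]
bound the first two terms together by $4\|h-g\|$, the next two by $\frac12\|g''\|\gamma_{n}^{2}$, and the last by $\omega\big(h,|y_{n}-x|\big)=\omega\big(h,\tfrac{2r-k+1}{n-r}x\big)$ via \eqref{e14} (reading $|2r-k+1|$ if that quantity is negative). Taking the infimum over $g$ produces $4\,K(h,\gamma_{n}^{2}/8)+\omega\big(h,\tfrac{2r-k+1}{n-r}x\big)$, and then \eqref{e12} together with the monotonicity of $\omega_{2}$ converts $4K(h,\gamma_{n}^{2}/8)$ into $C\,\omega_{2}(h,\gamma_{n})$ after absorbing the numerical factor into the absolute constant. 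Substituting $h=f^{(r)}$ gives the assertion. The only genuinely delicate point is the bookkeeping: confirming that the quantity $M_{n,k,r}^{\ast}(\varphi_{x,2};x)+(y_{n}-x)^{2}$ coming out of the symmetrization is exactly $\gamma_{n}^{2}$, and tracking the constant through the $K$-functional estimate; everything else is routine.
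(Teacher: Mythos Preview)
Your proposal is correct and follows essentially the same route as the paper: introduce the modified operator $\widehat{M}_{n,k,r}$ (the paper's $\overline{M_{n,k,r}^{\ast}}$) that reproduces linear functions, bound its action on smooth $g$ via the integral Taylor remainder, decompose $M_{n,k,r}^{\ast}(h;x)-h(x)$ accordingly, and pass to the $K$-functional and then to $\omega_{2}$ via \eqref{e12}. Your bookkeeping of the factor $\tfrac12$ and the observation about $|2r-k+1|$ are in fact slightly more careful than the paper's own argument, but the method is identical.
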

\begin{proof}
Let us consider the auxiliary operators $\overline{M_{n,k,r}^{\ast}}$ defined by
\begin{eqnarray}\label{eq9}
\overline{M_{n,k,r}^{\ast}}(f;x)=M_{n,k,r}^{\ast}(f;x)-f\left(x+\frac{2r-k+1}{n-r}x\right)+f(x).
\end{eqnarray}
Using Lemma \ref{l5}, we observe that the operators $\overline{M_{n,k,r}^{\ast}}$ are linear and reproduce the linear functions.\\
Hence
\begin{eqnarray}\label{eq10}
\overline{M_{n,k,r}^{\ast}}((t-x);x)=0.
\end{eqnarray}
Let $g\in C_{B}^{r+2}[0,\infty)$ and $x\in(0,\infty)$. By Taylor's theorem, we have
\begin{eqnarray*}
g^{(r)}(t)-g^{(r)}(x)=(t-x)g^{(r+1)}(x)+\int_{x}^{t}(t-v)g^{(r+2)}(v)dv,  \,\,\, t\in(0,\infty).
\end{eqnarray*}
Using (\ref{eq9}) and (\ref{eq10}), we get\\
\noindent
$|\overline{M_{n,k,r}^{\ast}}(g^{(r)};x)-g^{(r)}(x)|$
\begin{eqnarray*}
&=& \left|g^{(r+1)}(x)\overline{M_{n,k,r}^{\ast}}(t-x;x)+\overline{M_{n,k,r}^{\ast}}\left(\int_{x}^{t}(t-v)g^{(r+2)}(v)dv;x\right)\right|\\
&\leq&
\left|M_{n,k,r}^{\ast}\left(\int_{x}^{t}(t-v)g^{(r+2)}(v)dv;x\right)\right|+\left|\int_{x}^{x+\frac{2r-k+1}{n-r}x}\left(x+\frac{2r-k+1}{n-r}x-v\right)g^{(r+2)}(v)dv\right|.
\end{eqnarray*}
Observe that
\begin{eqnarray*}
\left|M_{n,k,r}^{\ast}\left(\int_{x}^{t}(t-v)g^{(r+2)}(v)dv;x\right)\right|\leq||g^{(r+2)}||M_{n,k,r}^{\ast}((t-x)^{2};x)
\end{eqnarray*}
and
\begin{eqnarray*}
\left|\int_{x}^{x+\frac{2r-k+1}{n-r}x}\left(x+\frac{2r-k+1}{n-r}x-v\right)g^{(r+2)}(v)dv\right|\leq||g^{(r+2)}||\left(\frac{2r-k+1}{n-r}x\right)^{2}.
\end{eqnarray*}
Hence by Lemma \ref{l5}, we have
\begin{eqnarray}\label{eq11}
|\overline{M_{n,k,r}^{\ast}}(g^{(r)};x)-g^{(r)}(x)|\leq||g^{(r+2)}||\left(\frac{4r^{2}+4r(2-k)+2n+k^{2}-5k+4}{(n-r)(n-r-1)}x^{2}+\left(\frac{2r-k+1}{n-r}x\right)^{2}\right).
\end{eqnarray}
Now $g\in C_{B}^{r+2}[0,\infty)$, using (\ref{eq11}), we obtain
\begin{eqnarray*}
\left|\frac{1}{b(n,k,r)}M_{n,k}^{(r)}(f;x)-f^{(r)}(x)\right|&=&|M_{n,k,r}^{\ast}(f^{(r)};x)-f^{(r)}(x) |\\
&\leq&|\overline{M_{n,k,r}^{\ast}}(f^{(r)}-g^{(r)};x)-(f^{(r)}-g^{(r)})(x)|+|\overline{M_{n,k,r}^{\ast}}(g^{(r)};x)-g^{(r)}(x)|\\
&+&\left|f^{(r)}\left(x+\frac{2r-k+1}{n-r}x\right)-f^{(r)}(x)\right|\\
&\leq&4||f^{(r)}-g^{(r)}||+\gamma_{n}^{2}||g^{(r+2)}||+\omega\left(f^{(r)},\frac{2r-k+1}{n-r}x\right).
\end{eqnarray*}
Taking infimum over all $g\in C_{B}^{r+2}[0,\infty)$, we obtain
\begin{eqnarray*}
\left|\frac{1}{b(n,k,r)}M_{n,k}^{(r)}(f;x)-f^{(r)}(x)\right|\leq K\left(f^{(r)},\gamma_{n}^{2}\right)+\omega\left(f^{(r)},\frac{2r-k+1}{n-r}x\right).
\end{eqnarray*}
Using (\ref{e12}), we have
\begin{eqnarray*}
\bigg|\frac{1}{b(n,k,r)}M_{n,k}^{(r)}(f;x)-f^{(r)}(x)\bigg|\leq C\omega_{2}\left(f^{(r)},\gamma_{n}\right)+\omega\left(f^{(r)},\frac{2r-k+1}{n-r}x\right).
\end{eqnarray*}
Hence, the proof is completed.
\end{proof}

{\bf Acknowledgements}
The author is extremely grateful to Prof. P. N. Agrawal, IITR, India for making valuable suggestions leading to the overall improvements in the paper.

\end{document}